\newtheorem{thm}{Theorem}[section]
\newtheorem{cor}[thm]{Corollary}
\newtheorem{lem}[thm]{Lemma}
\newtheorem{defn}[thm]{Definition}
\theoremstyle{remark}
\newtheorem{rem}[thm]{Remark}
\numberwithin{equation}{section}
\newcommand{\xqedhere}[2]{%
  \rlap{\hbox to#1{\hfil\llap{\ensuremath{#2}}}}}
\def\({\Bigl(}
\def \){ \Bigr)}
\newcommand{\norm}[1]{\left\Vert#1\right\Vert}
\newcommand{\abs}[1]{\left\vert#1\right\vert}
\newcommand{\set}[1]{\left\{#1\right\}}
\begin{document}

\title[] {Dunkl translations, 
Dunkl--type $BMO$ space 
and\\
Riesz transforms for Dunkl transform on $L^\infty$ }

\author[]{Wentao Teng} 

\address{School of Science and Technology, Kwansei Gakuin University, Japan.}

\email{wentaoteng6@sina.com.}

\keywords{Dunkl translations ; Riesz transforms ; Dunkl--type $BMO$ space .} \subjclass{ 42B15, 42B20, 42B35
.}

\begin{abstract} In this paper, we will give some results on the support of Dunkl translations on compactly supported functions. Then we will define Dunkl--type $BMO$ space and Riesz transforms for Dunkl transform on $L^\infty$, and prove the boundedness of Riesz transforms from $L^\infty$ to Dunkl--type $BMO$ space under the uniform boundedness assumption of Dunkl translations. The proof and the definition in Dunkl setting will be harder than in the classical case for the lack of some similar properties of Dunkl translations to that of classical translations. We will also extend the preciseness of the description of support of Dunkl translations on characteristic functions by Gallardo and Rejeb to that on all nonnegative radial functions in $L^2(m_k)$.

\end{abstract}

\maketitle
\input amssym.def

\section{Introduction}

Let $T$ be bounded operator on $L^2(\mathbb{R}^N)$ 
and $K$ be a function on
$\mathbb{R}^N\times\mathbb{R}^N\backslash$ $\left\{\left(x,x\right):\;x\in\mathbb{R}^N\right\}$, such that for any $f\in
L^2 (\mathbb{R}^N)$ with a compact support, 
$$Tf(x)=\int_{\mathbb{R}^N}K(x,y)f(y)dy,\;x\in\mathbb{R}^N\backslash supp(f),$$
where $K$ satisfies
\begin{equation}\int_{\left|x-y\right|>2\left|x-w\right|}\left|K(x,y)-K(w,y)\right|dy\leq C,\end{equation}
then $T$ is a bounded operator from $L^\infty$ to $BMO$ space, or
the space of bounded mean oscillation functions. Let
$K(x,y)=c_N(x_j-y_j)/\vert x-y\vert^{N+1},\;j=1,\:\dots,\;N$. For
any $\varepsilon>0$ consider the truncation $K_\varepsilon$
defined by $K_\varepsilon(x,y)=K(x,y)$ if $\vert
x-y\vert>\varepsilon$, and $K_\varepsilon(x,y)=0$ if $\vert
x-y\vert\leq\varepsilon$. If $f$ is a bounded function, the
ordinary Riesz transform is defined by
$$R_j(f)(x)=\lim_{\varepsilon\rightarrow0}\int_{\mathbb{R}^N}(K_\varepsilon(x,y)-K_1(0,y))f(y)dy.$$
It is well known the Riesz transform is bounded on
$L^2(\mathbb{R}^N)$ and that the kernel $K(x,y)$ satisfies (1.1), and so is a bounded operator from $L^\infty$ to $BMO$
space.  In this paper we will extend analogous results to the
context of Dunkl theory. 

In \cite{AS}, the $L^p$-boundedness, $1<p<\infty$ and weak $L^1$ boundedness of Riesz transforms for Dunkl transform was proved by adapting the classical $L^p$-theory of Calder\'on--Zygmund, and 
so 
the Riesz transforms can be defined as bounded operators on $L^p,\;1<p<\infty$ and weakly bounded operators on $L^1$ (see also \cite{I} for the $L^p$ boundedness of Dunkl--Riesz transforms with radial power weights). But there is no reasonable and coincident definition of Riesz transforms by integral on $L^1(m_k)$ in Dunkl setting. Recently, the Riesz transforms were defined in a weak sense on 
$L^1(m_k)$ (see \cite{ADH}) using a test function space containing a Poisson kernel, and it 
was shown in \cite{ADH} and \cite{DH} that in Dunkl setting, the real Hardy space $H_\triangle^1$ associated to the Dunkl Laplacian $\triangle$ can be characterized by Riesz transforms and also coincide with 
$H_{atom}^1$. 

The formula that Dunkl translation operators $\tau_x$ are contractions on $L^2(m_k)$ is well-known:
$${\left\|\tau_yf\right\|}_{2,\;k}\leq{\left\|f\right\|}_{2,\;k},\;f\in L^2(m_k).$$
Assume the uniform $L^1$-boundedness of the Dunkl translations (see \cite{DH1}). Then by Riesz-T\"orin interpolation and skew-symmetry of Dunkl translations, the 
uniform $L^p$-boundedness ($1\leq p\leq\infty$) can be get immediately, that is, for any root system $R$ and multiplicity function $k\geq0$ and for any $f\in L^p(m_k)$,
$${\left\|\tau_yf\right\|}_{p,\;k}\leq C{\left\|f\right\|}_{p,\;k},$$
where $C$ is a constant independent of $y$. It has been known that this assumption holds for radial functions and one-dimensional case, and hence for $G=Z_2^N$ case. 
Here we call this assumption as the uniform boundedness assumption of Dunkl translations. There have been many results based on this long open assumption and it would be an excellent work if one could prove this assumption. In \cite{Go}, the authors proved the uniform boundedness of the spherical average of Dunkl translations, and as applications, they found this uniform boundedness assumption can be avoided in the proof of some related results. But it is still inevitable for many other results, such as the $L^p$ boundedness of the Dunkl multiplier operator for $s>\mathbf N/2$ (see \cite[Theorem 8.1]{DH1}), and the Theorem 1.1 in this paper as well. 

In this paper we will define Dunkl--type $BMO$ space and $\mathcal R_jf$, where $\mathcal R_j$ is the Riesz transform for Dunkl transforms, as Dunkl--type $BMO$ functions for all $f\in L^\infty$. Then under the uniform boundedness assumption of Dunkl translations, we 
will prove the boundedness of the Riesz transforms from $L^\infty$ to Dunkl--type $BMO$ space. This will also mean a half part of duality of the Hardy space $H_\triangle^1$ and Dunkl--type $BMO$ space.

\begin{thm} Under the uniform boundedness assumption of Dunkl translations, the Riesz transform for Dunkl transforms are bounded operators from $L^\infty$ to the Dunkl--type $BMO$ space.
\end{thm}

The part ii of the following theorem shows that the support of $\tau_{-x}f$ obtained in \cite[Theorem 1.7]{DH1} (part i of the following Theorem) is precise when the multiplicity function $k>0$. The preciseness has been proved for characteristic functions by Gallardo and Rejeb \cite{Ga} and we extend the result to any nonnegative radial functions on $L^2(m_k)$ in this paper.
\begin {thm}
If $f\in L^2(m_k)$ and $suppf\subseteq B(0,r)$, then for any $x\in\mathbb{R}^N$\\
i).(See \cite[Theorem 1.7]{DH1})\;$$supp\tau_xf(-\cdot)\subseteq\bigcup_{g\in G}B(gx,\;r).$$
ii).\;If the multiplicity function $k>0$ and let $f$ be a nonnegative radial function on $L^2(m_k)$,  $suppf=B(0,r)$, then 
$$supp\tau_xf(-\cdot)=\bigcup_{g\in G}B(gx,\;r).$$
\end {thm}
The part i of this theorem also means that the Dunkl translation of a function on $L^2(m_k)$ with a compact support is compactly supported. This will be used in the proof of Theorem 1.1. However, different from classical analysis, $suppf\subseteq\bigcup_{g\in G}B(gx,r)$, $x\in\mathbb{R}^N$, can not usually imply $supp\tau_xf\subseteq B(0,r)$ as will be shown in Section 2. This led to the differences between the proof of the boundedness of Riesz transforms from $L^\infty$ to the $BMO$ space in Dunkl setting and that in classical case.

This paper is organized as follows. In Section 2 we present some definitions and fundamental results from Dunkl's analysis. In Section 3, we will prove Theorem 1.1. ii) and give more information about the support of Dunkl translations on compactly supported functions based on the results of  \cite{DH1}. Section 4 is devoted to Riesz transforms for Dunkl transform. In Section 5, the Dunkl--type $BMO$ space and Riesz transforms for Dunkl transform on $L^\infty$ will be defined and we will prove the boundedness of the Riesz transforms from $L^\infty$ to Dunkl--type $BMO$ space. We first prove for compactly supported functions and then for all functions on $L^\infty$ using a Lemma we will give in the section.

\section{Preliminaries}

For any $x,y$ in the Euclidean space $\mathbb{R}^N$, denote by $\left\langle x,y\right\rangle=\sum\nolimits_{j=1}^Nx_jy_j$ the standard inner product associated with 
norm $\left\|x\right\|$. For any nonzero
vector $\alpha\in\mathbb{R}^N$, define the reflection $\sigma_\alpha$
with respect to the hyperplane $\alpha^\perp$ orthogonal to $\alpha$, 
$$\sigma_\alpha(x)=x-2\frac{\left\langle x,\alpha\right\rangle}{\left\|\alpha\right\|^2}\alpha.$$ 
A finite set $R\subset\mathbb{R}^N\backslash\left\{0\right\}$ is
called a $root$ $system$ if $\sigma_\alpha(R)=R$ for any $\alpha\in R$.
Given a root system $R$, the finite subgroup $G$ of $O(N)$ generated by the
reflections $\sigma_\alpha$ is called the $finite$ $relection$
$group$ of the root system. Define a $multiplicity$ $function$ $k:R\rightarrow \mathbb C$
such that $k$ is $G$-invariant, that is, $k\left(\alpha\right)=k\left(\beta\right)$ if $\sigma_\alpha$ and $\sigma_\beta$ are conjugate. We assume $k\geqslant0$ in 
this paper. The $Dunkl\;operators\;T_\xi$,
$\xi\in\mathbb{R}^N$, which were introduced in \cite{Du}, are defined by the following deformations by difference operators of directional derivatives $\partial_\xi$: \begin{align*}T_\xi f(x)&=\partial_\xi f(x)+\sum_{\alpha\in
R}\frac{k(\alpha)}2\left\langle\alpha,\;\xi\right\rangle\frac{f
(x)-f(\sigma_\alpha(x))}{\left\langle\alpha,\;x\right\rangle}\\&=\partial_\xi
f(x)+\sum_{\alpha\in
R^+}k(\alpha)\left\langle\alpha,\;\xi\right\rangle\frac{f(x)-f
(\sigma_\alpha(x))}{\left\langle\alpha,\;x\right\rangle},\end{align*}
where $R^+$ is any fixed positive subsystem of $R$. They commute pairwise and are skew-symmetric with respect to the $G$-invariant measure $dm_k(x)=h_k^2(x)dx$, where
$$h_k(x)=\prod_{\alpha\in R^+}\vert\left\langle\alpha,\;x\right\rangle\vert^{k(\alpha)}$$
and $m_k$ is a doubling measure, that is, there is a constant $C >
0$ such that
\begin{equation}\label{mk}m_k(B(x,2r))\leq Cm_k(B(x,r))\end{equation} for $x\in\mathbb{R}^N,\;r>0$, where $B(x,r)=\{y\in\mathbb{R}^N:$
$\left\|x-y\right\|\leq r\}.$ Denote by $\mathbf N\boldsymbol=N+\sum_{\alpha\in R}k(\alpha)$ the homogeneous dimension of the root system.
Let $e_j,\;j=1,2,...,N,$ be the canonical orthonormal basis in $\mathbb{R}^N$ and denote $T_j=T_{e_j}.$
The $Dunkl$ $Laplacian$ is defined by $\triangle={\textstyle\sum_{j=1}^N}T_j^2$. It commutes with the action of $G$, that is, $g\circ\triangle=\triangle\circ g$ for 
any $g\in G$, and has the following explicit expression,
$$\triangle f\left(x\right)=\triangle_{eucl}f\left(x\right)+2\sum_{\alpha\in R^+}k\left(\alpha\right)\left(\frac{\left\langle\nabla_{eucl}f,\alpha\right\rangle}{\left
\langle\alpha,x\right\rangle}-\frac{f\left(x\right)-f\left(\sigma_\alpha\left(x\right)\right)}{\left\langle\alpha,x\right\rangle^2}\right).$$
The operator $-\triangle$ is essentially self-adjoint and positive definite and so $\triangle$ is the generator of the contraction semigroup ${\left\{e^{t\triangle}\right\}}_{t\geq0}$.

The operators $\partial_\xi$ and $T_\xi$ are intertwined by a Laplace--type operator $$V_kf(x)=\int_{\mathbb{R}^N}f(y)d\mu_x(y),$$ associated to a family of probability 
measures $\left\{\mu_x\vert\;x\in\mathbb{R}^N\right\}$ with compact support, that is, 
$$T_\xi\circ V_k=V_k\circ \partial_\xi.$$ 
Specifically, the support of $\mu_x$ is contained in the convex hull $co(G\cdot x)$, where $G\cdot x=\left\{g\cdot x\vert\;g\in G\right\}$ is the orbit of $x$.
For any Borel set $B$ and any $r>0$, $g\in G$, the probability measures satisfy 
$$\mu_{rx}\left(B\right)=\mu_x\left(r^{-1}B\right),\;\mu_{gx}\left(B\right)=\mu_x\left(g^{-1}B\right).$$

The Dunkl kernel $E(x,y)$ is defined by 
$$E\left(x,y\right)=V_k\left(e^{\left\langle\cdot,y\right\rangle}\right)\left(x\right)=\int_{\mathbb{R}^d}e^{\left\langle\eta,y\right\rangle}d\mu_x\left(\eta\right).$$
It is the generalization of exponential function $e^{\left\langle x,y\right\rangle}$.
For any fixed $y\in\mathbb{R}^N$, the Dunkl kernel $E(x,y)$ is the unique analytic solution to the differential equation system
$$T_\xi f=\left\langle\xi,y\right\rangle f,\;\;f(0)=1.$$

For $f\in L^1(m_k)$ the Dunkl transform is defined by 
$$F(f)(\xi)=\frac1{c_k}\int_{\mathbb{R}^N}f(x)E(-i\xi,\;x)dm_k(x),\;c_k=\int_{\mathbb{R}^N}e^{-\frac{\left|x\right|^2}2}dm_k(x).$$

Obviously, $F\left(\triangle f\right)\left(\xi\right)=-\left\|\xi\right\|^2Ff\left(\xi\right)$ and $F(e^{t\triangle}f)=e^{t\left|\cdot\right|^2}F\left(f\right),\;f\in 
L^2\left(m_k\right).$
It follows that 
$$e^{t\triangle}f\left(x\right)=k_t\ast f=\int_{\mathbb{R}^N}h_t(x,y)f(y)dm_k(y),$$
where $k_t(x)=c_k^{-1}\left(2t\right)^{-\mathbf N/2}e^{-\left|x\right|^2/\left(4t\right)}$ and the heat kernel $h_t(x,y)=\tau_xk_t(-y).$

Let $x\in\mathbb{R}^N$, the Dunkl translation operator $\tau_x$ is defined on $L^1(m_k)$ by,
$$F(\tau_x(f))(y)=E\left(ix,\;y\right)Ff(y),\;y\in\mathbb{R}^N.$$
It can also be defined by 
$$\tau_xf\left(y\right)={\left(V_k\right)}_y{\left(V_k\right)}_x\left[\left(V_k\right)^{-1}\left(f\right)\left(x+y\right)\right].$$
Here are some basic properties of Dunkl translatons.
$$\begin{array}{l}1.\;(identity)\;\tau_0=I;\\2.\;(Symmetry)\;\tau_xf(y)=\tau_yf(x),\;x,\;y\in\mathbb{R}^N,\;f\in S(\mathbb{R}^N);\\3.\;(Scaling)\;\tau_x(f_\lambda)=(\tau_{\lambda^{-1}x}f)_\lambda,\;\lambda>0,\;x\in\mathbb{R}^N,\;f\in S(\mathbb{R}^N);\;\\4.\;(Commutativity)\;T_\xi(\tau_xf)=\tau_x(T_\xi f),\;x,\;\xi\in\mathbb{R}^N;\\5.\;(Skew-symmetry)\\\;\;\;\;\displaystyle{\int_{\mathbb{R}^N}\tau_xf(y)g(y)dm_k(y)}=\displaystyle{\int_{\mathbb{R}^N}f(y)\tau_{-x}g(y)dm_k(y)},\;x\in\mathbb{R}^N,\;f,\;g\in S(\mathbb{R}^N);\end{array}$$
The Dunkl translations can be defined on $L^p(m_k),\;1\leq p\leq\infty$ in the distributional sense due to the latter formula. Further, 
$$\int_{\mathbb{R}^N}\tau_xf(y)dm_k(y)=\int_{\mathbb{R}^N}f(y)dm_k(y),\;x\in\mathbb{R}^N,\;f\in S(\mathbb{R}^N).$$

The following formula for radial functions was first proved by R\"osler \cite {R} for Schwartz functions,
and was then extended to all continuous radial functions in \cite{DW}:
\begin {equation}
\tau_xf(-y)=\int_{\mathbb{R}^N}(\widetilde f\circ A)(x,\;y,\;\eta)d\mu_x(\eta),\;x,\;y\in\mathbb{R}^N,
\end {equation}
where $f(x)=\widetilde f(\left\|x\right\|)$ and $$A(x,\;y,\;\eta)=\sqrt{\left\|x\right\|^2+\left\|y\right\|^2-2\left\langle y,\;\eta\right\rangle}=\sqrt{\left\|x\right\|^2-\left\|\eta\right\|^2+\left\|y-\eta\right\|^2},$$
It follows from the symmetry of Dunkl translations that (see \cite{Ga1})
$$\tau_{-x}f(y)=\tau_yf(-x)=\tau_xf(-y),\;x,\;y\in\mathbb{R}^N,\;f\in S(\mathbb{R}^N)_{rad}.$$

The Dunkl convolution of Schwartz functions is defined by 
$$(f\ast g)(x)=\int_{\mathbb{R}^N}f(y)\tau_xg(-y)dm_k(y),$$
or can be written as 
$$(f\ast g)(x)=\int_{\mathbb{R}^N}(Ff)(\xi)(Fg)(\xi)E(ix,\;\xi)dm_k(\xi).$$
The following are some basic properties of Dunkl convolution,\\
$1.\;F(f\ast g)=Ff\cdot Fg;$\\
$2.\;F(f\cdot g)=Ff\ast Fg;$\\
$3.\;f\ast g=g\ast f;$\\
$4.\;(f\ast g)\ast h=f\ast(g\ast h);$\\
$5.\;{\left\|f\ast g\right\|}_{2,\;k}\leq{\left\|f\right\|}_{1,\;k}{\left\|g\right\|}_{2,\;k},\;f\in L^1(m_k),\;g\in L^2(m_k).$\\

\section{Some results on the supports of Dunkl translations}

\noindent{\it Proof of Theorem 1.2.\;ii)}. It suffices to prove that
$$supp\tau_xf(-\cdot)\supseteq\bigcup_{g\in G}B(gx,\;r).$$

Firstly, we will prove for continuous nonnegative radial functions. Suppose there exists a $y\in\bigcup\limits_{g\in G}B(gx,\;r)$,
that is, there exists a $g\in G$, $\left\|y-g\cdot x\right\|\leq
r$, such that $y\not\in supp\tau_xf \left(-\cdot\right)$, that is,
there exists $\varepsilon>0$, for any $z\in B(y,\varepsilon)$,
$$0=\tau_xf(-z)=\int_{\mathbb{R}^N}\widetilde f(\sqrt{\left\|x\right\|^2+\left\|z\right\|^2-2\left\langle z,\;\eta\right\rangle})d\mu_x(\eta),$$
then
$$\widetilde f(\sqrt{\left\|x\right\|^2+\left\|z\right\|^2-2\left\langle z,\;\eta\right\rangle})=0,\;for\;any\;\eta\in supp\mu_x.$$
By a result of Gallardo and Rejeb (see \cite{Ga}), that the orbit
of $x$, $G\cdot x$, is contained in the support of $\mu_x$ if
$k>0$, for the above $g$ we can select $\eta=g\cdot x$, then
$f(z-g\cdot x)=\widetilde f(\left\|z-g\cdot x\right\|)=0$. For any
$z_1\in B(y-g\cdot x,\varepsilon)$, $z_1+g\cdot x\in
B(y,\varepsilon)$, and so $f\left(z_1\right)=f(z_1+g\cdot x-g\cdot
x)=0$, which means $y-g\cdot x\not\in suppf$, and this leads to a
contradiction to that $suppf=B(0,r)$.

Then for any nonnegative radial functions $f$ on $L^2(m_k)$, $suppf=B(0,r)$, by the density of continuous functions with compact support $B(0,r)$ in $L^2(B(0,r),m_k)$, there exists a sequence of continuous nonnegative radial functions $g_n$ whose support is $B(0,r)$, such that $f/2$ can be approximated by $g_n$ with respect to $L^2$-norm. So for any nonnegative smooth function $\varphi$ on $\mathbb{R}^N$ with compact support, $\int g_n\varphi\rightarrow\int\frac f2\varphi$. If $(supp\varphi)^\circ\cap B(0,r)\neq\varnothing$, then $\int f\varphi>0$, where $A^\circ$ stands for the interior of $A$ for any $A\subseteq\mathbb R^N$. So there exists a sufficiently large natural number $L$ such that $\int g_L\varphi<\int f\varphi$. If $(supp\varphi)^\circ\cap B(0,r)=\varnothing$, then for any $n\in\mathbb{N}$, $\int g_n\varphi=\int f\varphi=0$.  So for any nonnegative smooth function $\varphi$ on $\mathbb{R}^N$ with compact support, $\int g_L\varphi\leq\int f\varphi$. Thus $g_L\leq f\;a.e.$ and $\int\tau_{-x}g_L\cdot\varphi\leq\int\tau_{-x}f\cdot\varphi$ by positivity of Dunkl translations on radial functions. Let $D=(supp\tau_{-x}f)^c$, then $D$ is the largest open set such that $0=\int\tau_{-x}f\cdot\varphi$ for any smooth functions functions $\varphi$ with compact support in $D$. If $\varphi\geq0$, then $\int\tau_{-x}g_L\cdot\varphi=0$. Then by $\tau_{-x}g_L\geq0$, \begin{align*}\bigcup_{g\in G}B(gx,\;r)=supp\tau_{-x}g_L\subseteq D^c=supp\tau_{-x}f.\xqedhere{2.814cm}{\qed}\end{align*}

\begin {rem}
This theorem does not hold for $k\geqslant0$. For example, for any nontrival finite reflection group $G$, we can take $k=0$. Then $supp\tau_xf(-\cdot)=B(x,r)$ when $suppf=B(0,r)$ and is obviously not $\bigcup_{g\in G}B(gx,\;r)$ since $G$ is nontrival. We refer to \cite[Example 3.1]{Ga} for more counterexamples.
\end {rem}

\begin {cor}
Let $f\in L^2(m_k)$ and $x\in\mathbb{R}^N$, $suppf\cap\bigcup\limits_{g\in
G}B(gx,r)=\varnothing$, then 
$supp\tau_xf\cap B(0,r)=\varnothing.$
\end {cor}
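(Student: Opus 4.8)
The plan is to derive the corollary from Theorem 3.1 by a duality/symmetry argument, using the skew-symmetry property of Dunkl translations and the characterization of $\supp\tau_x f(-\cdot)$ just obtained. First I would reduce the problem to a statement about inner products: $\supp\tau_x f\cap B(0,r)=\varnothing$ is equivalent to saying that $\int_{\mathbb{R}^N}\tau_x f(y)\,\varphi(y)\,dm_k(y)=0$ for every $\varphi\in C_c^\infty$ supported in $B(0,r)$. By the skew-symmetry of Dunkl translations (property 5 in Section 3, extended to the distributional pairing on $L^2$), this pairing equals $\int_{\mathbb{R}^N}f(y)\,\tau_{-x}\varphi(y)\,dm_k(y)$, which in turn, using $\tau_{-x}\varphi(y)=\tau_x\varphi(-y)$, equals $\int_{\mathbb{R}^N}f(y)\,\tau_x\varphi(-y)\,dm_k(y)$.

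Next I would apply Theorem 3.1 to the test function $\varphi$: since $\supp\varphi=B(0,r)$ (or is contained in it; one can take $\varphi$ to be a nonnegative bump that is positive on all of $B(0,r)$ and approximate, or simply note that the support statement localizes), we get $\supp\tau_x\varphi(-\cdot)=\bigcup_{g\in G}B(gx,r)$ up to a null set. Therefore the integrand $f(y)\tau_x\varphi(-y)$ is supported, a.e., in $\supp f\cap\bigcup_{g\in G}B(gx,r)$, which by hypothesis is empty. Hence the pairing vanishes for all such $\varphi$, and since $\tau_x f\in L^2(m_k)$ this forces $\tau_x f=0$ a.e. on $B(0,r)$, i.e. $\supp\tau_x f\cap B(0,r)=\varnothing$.

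The main obstacle I anticipate is the careful handling of the null-set qualifiers and the passage from "the support characterization holds a.e." to "the product of $f$ with $\tau_x\varphi(-\cdot)$ vanishes a.e." One has to make sure that the exceptional null set in Theorem 3.1 (for the function $\varphi$) does not interact badly with $\supp f$; since $m_k$ is absolutely continuous with respect to Lebesgue measure with density $h_k^2$, a Lebesgue-null set is also $m_k$-null away from the zero set of $h_k$, and on the reflection hyperplanes $h_k$ vanishes, so one should phrase everything in terms of $m_k$-a.e. equalities from the start to avoid this subtlety. A secondary point is justifying the skew-symmetry identity for the $L^2$ function $f$ paired against a Schwartz (or $C_c^\infty$) test function; this follows by density from property 5 together with the $L^2$-boundedness of $\tau_{\pm x}$ recorded in Section 3, so it is routine but should be stated. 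Once these measure-theoretic points are pinned down, the corollary follows immediately and its proof is short.
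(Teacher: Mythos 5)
Your proposal is correct and follows essentially the same route as the paper: pair $\tau_x f$ against test functions supported in $B(0,r)$, move the translation onto the test function by skew-symmetry, and use the support inclusion $\supp\tau_x\varphi(-\cdot)\subset\bigcup_{g\in G}B(gx,r)$ to make the pairing vanish. The only cosmetic difference is that you invoke the full equality of Theorem 3.1 (and restrict to $C_c^\infty$ bumps) where only the easy inclusion from \cite{DH1}, valid for any $L^2$ function supported in $B(0,r)$, is actually needed.
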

\begin {proof}
For any function $g\in L^2(m_k)$, $suppg\subseteq B(0,r)$, from Theorem 1.2. i),
$$supp\tau_{-x}g\subseteq\bigcup_{g\in G}B(gx,r).$$
By the skew-symmetry of Dunkl translations,
$$\int_{\mathbb{R}^N}\tau_xf(y)g(y)dm_k(y)=\int_{\mathbb{R}^N}f(y)\tau_{-x}g(y)dm_k(y)=0.$$
Then $\tau_xf(y)=0$, $y\in B(0,r).$
\end {proof}

Define the distance of the orbits $G\cdot x$ and $G\cdot y$ (see
\cite{DH}),
\begin{equation}\\d_G(x,y)=\underset{g\in
G}{min}\vert\vert g\cdot y-x\vert\vert.\end{equation} For any
fixed point $x$ and a ball $B(x,r)$ with center $x$, let
$B^\ast=B(x,2r)$ and $Q^\ast=\bigcup_{g\in G}gB^\ast$. For any
$y\in B(x,r)$, if $z\in\mathbb{R}^N\backslash Q^\ast$, then (see
\cite{AS})
\begin{equation}
d_G(x,z)>2\vert\vert y-x\vert\vert\\.
\end{equation}

\begin {thm}
Let $f\in L^p(m_k),\;1\leq p<\infty$ be a radial function,
$suppf\cap B(0,r)=\varnothing,$ then for any $x\in\mathbb{R}^N$,
\begin {equation}
supp\tau_xf(-\cdot)\cap\bigcap_{g\in G}B(gx,r)=\varnothing.
\end {equation}

\end {thm}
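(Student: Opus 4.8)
The plan is to reduce the general $L^p$ statement to the case of continuous radial functions, exactly mirroring the strategy already used in Theorem~3.1 and its corollary. First I would observe that for a continuous radial $f$ with $\supp f\cap B(0,r)=\varnothing$, the R\"osler--Dai--Wang formula (3.1) gives
$$\tau_xf(-y)=\int_{\mathbb{R}^N}\widetilde f\bigl(A(x,y,\eta)\bigr)\,d\mu_x(\eta),\qquad A(x,y,\eta)=\sqrt{\|x\|^2-\|\eta\|^2+\|y-\eta\|^2},$$
with $\mu_x$ supported in $co(G\cdot x)$. If $y\in\bigcap_{g\in G}B(gx,r)$, then $\|y-g\cdot x\|\le r$ for every $g\in G$. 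The key geometric point is that the map $\eta\mapsto A(x,y,\eta)$, restricted to the convex hull $co(G\cdot x)$, attains values that are all $\le r$: indeed $A(x,y,g\cdot x)=\|y-g\cdot x\|\le r$ at each vertex $g\cdot x$, and one checks that $\eta\mapsto \|x\|^2-\|\eta\|^2+\|y-\eta\|^2 = 2\|x\|^2 - 2\langle y,\eta\rangle$ (using $\|\eta\|=\|x\|$ on the orbit, then extending) is \emph{affine} in $\eta$, hence on the convex hull its maximum is attained at a vertex. Therefore $A(x,y,\eta)\le r$ for all $\eta\in co(G\cdot x)$, so $\widetilde f(A(x,y,\eta))=0$ for $\mu_x$-a.e.\ $\eta$, giving $\tau_xf(-y)=0$ on $\bigcap_{g\in G}B(gx,r)$.

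The subtlety here is that $\|\eta\|=\|x\|$ only holds at the extreme points $g\cdot x$, not on all of $co(G\cdot x)$, so $A^2$ is not literally affine as a function on $\mathbb{R}^N$; rather $A(x,y,\eta)^2=\|x\|^2+\|y\|^2-2\langle y,\eta\rangle$ \emph{is} affine in $\eta$, and this is the correct expression to use for the convexity argument. So the cleaner route is: $\eta\mapsto \|x\|^2+\|y\|^2-2\langle y,\eta\rangle$ is affine, its maximum over $co(G\cdot x)$ occurs at some vertex $g_0\cdot x$, and there it equals $\|y-g_0\cdot x\|^2\le r^2$; hence $A(x,y,\eta)\le r$ throughout $co(G\cdot x)$, and since $\widetilde f$ vanishes on $[0,r]$ we conclude as above.

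Next I would pass from continuous radial functions to general radial $f\in L^p(m_k)$. For $1\le p<\infty$ this is a density/approximation argument: approximate $f$ by continuous radial functions supported away from $B(0,r-\vz)$, use the uniform $L^p$-boundedness of $\tau_x$ (stated in the excerpt under the uniform $L^1$-boundedness assumption plus Riesz--Thorin) to pass to the limit, and let $\vz\to0$; the support statement is stable under $L^p$ limits in the "$\supp\cap\, U=\varnothing$, a.e." sense for open $U$. For $p=\infty$ one can either work locally — test against $L^1$ functions supported in a fixed ball inside $\bigcap_{g\in G}B(gx,r)$ and use the skew-symmetry identity $\int \tau_xf\cdot g\,dm_k=\int f\cdot\tau_{-x}g\,dm_k$ together with Theorem~3.1/Corollary~3.2 applied to $g$ — or truncate $f$ to $L^1\cap L^\infty$ and use the $L^1$ case. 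I would use the skew-symmetry route: for $g$ radial (or arbitrary) with $\supp g$ a compact subset of $\bigcap_{g\in G}B(gx,r)$, the translate $\tau_{-x}g=\tau_x g(-\cdot)$ has, by Theorem~3.1, support in $\bigcup_{h\in G}B(hx,\rho)$ for $\rho$ slightly less than $r$, which is disjoint from $\{z:\|z\|\ge r\}\supset \supp f$; hence the pairing vanishes.

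The main obstacle I anticipate is the $p=\infty$ (and to a lesser extent the general $p$) reduction: $\tau_x$ on $L^\infty$ is only defined in a distributional/dual sense, so "support" must be interpreted carefully, and one must make sure the duality pairing argument is legitimate — i.e.\ that $\tau_{-x}g$ genuinely has the claimed strictly-smaller support so that the integral against $f$ (which is merely in $L^\infty$, not decaying) still makes sense and vanishes. The geometric convexity lemma for $A(x,y,\eta)$ is the conceptual heart but is short; the analytic bookkeeping in the density/duality step is where care is needed. Once both pieces are in place, (3.2) follows. \hb
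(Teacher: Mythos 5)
Your handling of the continuous radial case and of $1\le p<\infty$ is correct and essentially the paper's own argument: reduce to continuous radial functions via R\"osler's formula, then extend by density in $L^p$. In fact your convexity lemma (the map $\eta\mapsto\|x\|^2+\|y\|^2-2\langle y,\eta\rangle$ is affine, so its maximum over $co(G\cdot x)$ is attained at some vertex $g_0\cdot x$, where it equals $\|y-g_0\cdot x\|^2$) is precisely the verification of the inequality $A(x,y,\eta)\le\max_{g\in G}\|g\cdot x-y\|$ that the paper dismisses as ``easy to see,'' so there you supply detail the paper omits. One small remark: for \emph{radial} functions the $L^p$-bound $\|\tau_xf\|_{p,k}\le\|f\|_{p,k}$ follows unconditionally from R\"osler's formula ($\mu_x$ is a probability measure), which is what the paper means by ``continuity of Dunkl translations on $L^p(m_k)$ for radial functions''; you do not need, and should not invoke, the conjectural uniform $L^1$-boundedness here, since that would make the theorem conditional for no reason.

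The genuine gap is in the route you chose for $p=\infty$. Theorem 3.1 (and the support theorem of \cite{DH1} behind it) concerns functions supported in a ball \emph{centered at the origin}, whereas your test function $\varphi$ is supported in a compact subset of $\bigcap_{g\in G}B(gx,r)$, which in general is not contained in any $B(0,\rho)$ with $\rho<r$ — indeed it need not be contained in $B(0,r)$ at all. Take $N=2$, $G=\{id,\sigma\}$ with $\sigma(y_1,y_2)=(y_1,-y_2)$, and $x=(a,b)$ with $0<b<r$ and $a$ large: then $(a,0)\in B(x,r)\cap B(\sigma x,r)$ although $\|(a,0)\|=a\gg r$. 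For the same reason your claimed disjointness of $\bigcup_{h\in G}B(hx,\rho)$ from $\{z:\|z\|\ge r\}$ amounts to $\bigcup_h B(hx,\rho)\subset B(0,r)$, which holds only when $\|x\|\le r-\rho$; since any such union contains points at distance about $\|x\|$ from the origin, a support inclusion of this kind can never work once $\|x\|\ge r$, yet in the example above $\bigcap_g B(gx,r)$ is still nonempty for such $x$, so the case is not vacuous. In that situation $\int f\,\tau_x\varphi\,dm_k$ vanishes because $f$ is radial (only the spherical means of $\tau_x\varphi$ beyond radius $r$ matter), not because $\tau_{-x}\varphi$ is supported in $B(0,r)$; so the skew-symmetry argument as you set it up does not close.

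Your parenthetical alternative is the correct repair (and is needed anyway, since the paper's own appeal to ``density of continuous functions in $L^p$'' does not literally cover $p=\infty$): truncate $f_n=f\chi_{B(0,n)}$, a radial function in $L^1\cap L^2(m_k)$ still supported outside $B(0,r)$, apply the already-proved case to get $\tau_xf_n(-\cdot)=0$ a.e.\ on $\bigcap_{g\in G}B(gx,r)$, and pass to the limit in the pairing $\langle\tau_xf,\varphi\rangle=\langle f,\tau_{-x}\varphi\rangle$ for $\varphi$ Schwartz (so that $\tau_{-x}\varphi\in L^1(m_k)$, since $F(\tau_{-x}\varphi)=E(-ix,\cdot)F\varphi$ is Schwartz) supported in the interior of $\bigcap_{g\in G}B(gx,r)$, using $|f_n|\le\|f\|_\infty$ and dominated convergence. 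If you retain the $p=\infty$ case, write out this version and drop the support claim for $\tau_{-x}\varphi$.
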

\begin {proof}
Let us prove for continuous radial functions first. It is easy to
see that 
\begin {equation}\underset{g\in G}{max}\left\|g\cdot x-y\right\|\geq
A\left(x,\;y,\;\eta\right)\geq d_G(x,\;y)\end {equation}
for any
$x,\;y\in\mathbb{R}^N$ and $\eta\in co(G\cdot x)$. For any
continuous radial functions $f$ with support contained in
$B(0,r)^c$, if
$$\tau_xf(-y)=\int_{\mathbb{R}^N}(\widetilde f\circ A)\left(x,\;y,\;\eta\right)d\mu_x(\eta)\neq0,$$
then $\underset{g\in G}{max}\left\|g\cdot x-y\right\|\geq r$.
Therefore, $supp\tau_xf(-\cdot)\cap\bigcap\limits_{g\in
G}B(gx,\;r)=\varnothing$. By the density of continuous functions
on $L^p(m_k)$ and the continuity of Dunkl translations on
$L^p(m_k)$ for radial functions, (3.2) can be extended to any
radial functions in $L^p(m_k)$.
\end {proof}
\begin {rem}
One may expect that $supp\tau_xf(-\cdot)\cap\bigcup_{g\in G}B(gx,r)=\varnothing$, but this is not correct even for a characteristic function for a general finite reflection group $G$. From a similar argument as in Corollay 3.2, this also means that $suppf\subseteq\bigcup_{g\in G}B(gx,r)$ can not usually imply $supp\tau_xf\subseteq B(0,r)$ as in classical case. 
 \end {rem}
As an immediate consequence of the theorem, the condition of the
Corollary 4.1 in \cite{DH1} can be weakened for radial functions.
\begin {cor} Suppose for all $g\in G$ and $x,\;y\in\mathbb{R}^N$,
$\left\|g\cdot x-y\right\|<1$. Let $f$ be a radial function in
$L^p(m_k),\;1\leq p<\infty$, $f(z)=0$ for all $z\in B(0,1)$,
then $ \tau_xf(y)=0$. \end {cor}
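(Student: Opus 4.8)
The plan is to deduce this corollary directly from Theorem 3.3 (labeled (3.2) in the excerpt) by a simple geometric observation about the hypothesis, so almost no new work is required. First I would note that the assumption ``$\|g\cdot x-y\|<1$ for all $g\in G$'' says precisely that $y\in\bigcap_{g\in G}B(gx,1)$; equivalently, the point $y$ lies in \emph{every} ball of the collection $\{B(gx,1):g\in G\}$, so in particular the intersection $\bigcap_{g\in G}B(gx,1)$ is nonempty and contains $y$. Meanwhile the hypothesis $f(z)=0$ for all $z\in B(0,1)$ means $\supp f\cap B(0,1)=\varnothing$, which is exactly the hypothesis of Theorem 3.3 with $r=1$.

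Next I would apply Theorem 3.3 with $r=1$: since $f$ is radial and vanishes on $B(0,1)$, we get $\supp\tau_xf(-\cdot)\cap\bigcap_{g\in G}B(gx,1)=\varnothing$. Since $y\in\bigcap_{g\in G}B(gx,1)$ by the previous step, it follows that $y\notin\supp\tau_xf(-\cdot)$, hence $\tau_xf(-y)=0$. Finally I would pass from $\tau_xf(-\cdot)$ to $\tau_xf(\cdot)$ using the symmetry relations recorded in Section 3, namely $\tau_xf(y)=\tau_{-x}f(-y)$; applying the already-established vanishing to the point $-y$ and the translate $\tau_{-x}$ (noting that the hypothesis $\|g\cdot x-y\|<1$ for all $g\in G$ is equivalent, after replacing $g$ by $g^{-1}$ and using that $G$ acts by isometries, to $\|g\cdot(-x)-(-y)\|<1$ for all $g\in G$) yields $\tau_xf(y)=0$. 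Alternatively, since the conclusion and both hypotheses are manifestly invariant under $(x,y)\mapsto(-x,-y)$, one can simply observe that Theorem 3.3 applied to $f(-\cdot)=f$ already gives the statement for $\tau_xf$ directly.

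There is essentially no obstacle here: the only thing to be careful about is the bookkeeping of signs and of the distinction between $\tau_xf(-\cdot)$ and $\tau_xf(\cdot)$, together with checking that the hypothesis ``$\|g\cdot x-y\|<1$ for all $g$'' is genuinely the negation of the conclusion of Theorem 3.3 at the point $y$ — i.e.\ that ``$y$ belongs to all the balls'' is the complement of ``$y$ avoids the intersection of all the balls,'' which is immediate. I would therefore write the proof in two or three lines: reinterpret the hypothesis as $y\in\bigcap_{g\in G}B(gx,1)$, invoke Theorem 3.3 (with $r=1$) to conclude $\bigcap_{g\in G}B(gx,1)$ is disjoint from $\supp\tau_xf(-\cdot)$ (equivalently $\supp\tau_xf$, using radiality of $f$ and the symmetry of $\tau$), and read off $\tau_xf(y)=0$. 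This also makes transparent why this is billed as a ``weakening'' of Corollary 4.1 of \cite{DH1}: there one needs $\|g\cdot x-y\|<1$ to force membership in the relevant support-controlling set, whereas radiality lets Theorem 3.3 replace a union of balls by an intersection of balls, which is a much larger region to avoid.
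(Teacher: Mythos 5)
Your first two steps are exactly the intended ``immediate consequence'' argument: the hypothesis says $y\in\bigcap_{g\in G}B(gx,1)$, and Theorem 3.3 with $r=1$ then gives $\tau_xf(-y)=0$. The gap is in your final sign-bookkeeping step, which is circular. Writing $\tau_xf(y)=\tau_{-x}f(-y)$ and applying Theorem 3.3 with $-x$ in place of $x$, what the theorem gives is that $\tau_{-x}f(-w)=0$ for every $w\in\bigcap_{g\in G}B(g(-x),1)$; to kill $\tau_{-x}f(-y)$ you must take $w=y$, and the membership condition is then $\|gx+y\|\le 1$ for all $g$, which is \emph{not} the hypothesis. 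What you actually do is take the point $w=-y$ (your observation that $\|g(-x)-(-y)\|=\|gx-y\|<1$), but that only yields $\tau_{-x}f(-(-y))=\tau_{-x}f(y)=\tau_xf(-y)=0$, i.e.\ the same statement as in your second step, not $\tau_xf(y)=0$. The ``alternative'' via $f(-\cdot)=f$ changes nothing, since the theorem still controls $\supp\,\tau_xf(-\cdot)$ rather than $\supp\,\tau_xf$. And the two conditions $\max_{g}\|gx-y\|\le1$ and $\max_{g}\|gx+y\|\le1$ are genuinely different unless $-\mathrm{id}\in G$: with the paper's convention $F(\tau_xf)(\xi)=E(ix,\xi)Ff(\xi)$ one has $\tau_xf(y)=f(x+y)$ in the classical case ($k=0$, trivial $G$), so taking $x=y$ with $\|x\|$ large satisfies $\|x-y\|=0<1$ while $\tau_xf(y)=\widetilde f(2\|x\|)$ need not vanish.

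So what your argument really proves is $\tau_xf(-y)=0$ (equivalently $\tau_{-x}f(y)=0$), which is precisely what Theorem 3.3 delivers and is presumably what the corollary intends; as printed, the statement appears to carry a sign slip (either the conclusion should read $\tau_xf(-y)=0$, or the hypothesis should be $\|g\cdot x+y\|<1$ for all $g\in G$). But the printed conclusion $\tau_xf(y)=0$ cannot be reached from the printed hypothesis by the symmetry relations alone, and your write-up presents a fallacious bridge for that step. Either prove the corrected statement, or add the hypothesis $-\mathrm{id}\in G$ (under which $\bigcap_{g\in G}B(gx,1)$ is symmetric and your deduction becomes valid), or explicitly flag the sign issue instead of passing over it.
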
 

\begin {rem}

This theorem cannot be extended to functions not necessarily radial because the Stone-Weierstrass theorem does not hold on $B(0,r)^c$, and there is no more precise result on the support of the distribution associated to  $\tau_xf(y)$ other than \cite{AA}.
 \end {rem}

\section{Riesz transforms for Dunkl transform}

The Riesz transforms ${\mathcal R}_j\;$in the Dunkl setting are defined by
$${\mathcal R}_j(f)(x)=\lim_{\varepsilon\rightarrow0}d_k\int_{\left\|y\right\|\geq\varepsilon}\tau_xf(-y)\frac{y_j}{\left\|y\right\|^{p_k}}dm_k(y),\;f\in S(\mathbb{R}^N),$$
where $j=1,\cdots,N$ and $d_k=2^{\mathbf N/2}\Gamma((\mathbf N+1)/2)/\sqrt\pi,\;$ $p_k=\mathbf N+1$.
It has been proved in \cite{TX} that  
\begin{align*}F(\mathcal R_jf)(\xi)=-i\frac{\xi_j}{\left\|\xi\right\|}(Ff)(\xi),\;j=1,2,\cdots,n \end{align*}
and $\mathcal R_ j$ is a bounded operator on $L^2(m_k)$.
Clearly,
\begin{align*}\mathcal R_jf=-T_{e_j}(-\triangle)^{-1/2}f=-\lim_{\varepsilon\rightarrow0,\;M\rightarrow\infty}c\int_\varepsilon^MT_{e_j}e^{t\triangle}f\frac{dt}{\sqrt t},\end{align*}
and the integral converges for $f\in L^2(m_k)$. It is obvious that the Riesz transforms commute with the Dunkl translations. If $f\in L^2(m_k)$ and has a compact support, it was shown in  \cite{AS} that for all
$x\in\mathbb{R}^N$ such that $g\cdot x\in\mathbb{R}^N\backslash
supp(f)$ for any $g\in G$, 
$$\mathcal R_j(f)(x)=\int_{\mathbb{R}^N}\mathcal K_j(x,y)f(y)dm_k(y),$$
where\\
$\mathcal K_j(x,y)=d_k\left\{\mathcal K_j^{(1)}(x,y)+\displaystyle\sum\limits_{\alpha\in R^+}\frac{k\left(\alpha\right)\alpha_j}{p_k-2}\mathcal K_j^{(\alpha)}(x,y)\right\},$\\

$\mathcal K_j^{(1)}(x,y)=\displaystyle\int_{\mathbb{R}^N}\frac{\eta_j-y_j}{A^{p_k}(x,y,\eta)}d\mu_x(\eta),$\\

$\mathcal K_j^{(\alpha)}(x,y)=\displaystyle\frac1{\left\langle
y,\alpha\right\rangle}\int_{\mathbb{R}^N}\left[\frac1{A^{p_k-2}\left(x,y,\eta\right)}-\frac1{A^{p_k-2}\left(x,
\sigma_\alpha
\left(y\right),\eta\right)}\right]d\mu_x(\eta),\;\alpha\in R^+$,\\
and $\mathcal K_j(x,y)$ satisfies the condition
\begin{equation}
\int_{d_G(x,z)>2\left\|y-x\right\|}\left|\mathcal K_j(z,\;x)-\mathcal K_j(z,\;y)\right|dm_k(z)\leq
C.\end{equation}
And the authors proved that $\mathcal R_j$ is a bounded operator on $L^p(m_k)$, $1<p<\infty$ in \cite{AS} using this Calder\'on--Zygmund condition in Dunkl setting.

Consider
$ \widetilde{\varphi}_{n,\varepsilon}$  a $C^\infty-$function on $\mathbb{R}$, such that:
\begin{itemize}
\item $\widetilde{\varphi}_{n,\varepsilon}$ is odd .
 \item $\widetilde{\varphi}_{n,\varepsilon}$ is supported in $\set{t\in \mathbb{R}; \;|t|\geq\varepsilon}$.
 \item $\widetilde{\varphi}_{n,\varepsilon}=1$ in $\set{t\in \mathbb{R}; \;t\geq\varepsilon+\frac{1}{n}}$.
 \item $\abs{\widetilde{\varphi}_{n,\varepsilon}}\leq1$.
 \end{itemize}
Let
$$\widetilde{\phi}_{n,\varepsilon}(t)=\int_{-\infty}^t \frac{\widetilde{\varphi}_{n,\varepsilon}(u)}{|u|^{p_k-1}} \;du\;\quad \mbox{and}
\quad \phi_{n,\varepsilon}(y)=\widetilde{\phi}_{n,\varepsilon}(\norm{ y }),\quad t\in\mathbb{R},\; y\in\mathbb{R}^N.$$ Clearly,
 $\phi_{n,\varepsilon}$  is a $C^\infty$ radial function and
   $$\lim_{n\rightarrow +\infty}\widetilde{\varphi}_{n,\varepsilon}(\norm{y})=1, \quad\forall \;y\in\mathbb{R}^N,\;\norm{y}>\varepsilon. $$
Denote by 
$\mathcal K_j^{(\varepsilon,n)}(x,y)=d_kT_j\tau_x(\phi_{n,\varepsilon})(-y)$. Here the action of $\tau_x$ on $\phi_{n,\varepsilon}$ is defined in the sense of distribution.
Then from the proof of \cite[Proposition 3.2]{AS}, 
$${\mathcal R}_j(f)(x)=\lim_{\varepsilon\rightarrow0}\lim_{n\rightarrow\infty}\int_{\mathbb{R}^N}\mathcal K_j^{(\varepsilon,n)}(x,y)f(y)dm_k(y),$$
where 
\begin{eqnarray*} T_j\tau_x(\phi_{n,\varepsilon})(-y)&=&
      \int_{\mathbb{R}^N}
\frac{(\eta_j-y_j)\widetilde{\varphi}_{n,\varepsilon}(A(x,y,\eta))}{  A^{p_k}(x,y,\eta) }d\mu_x(\eta)\\ &+&
\sum_{\alpha\in R_+}k(\alpha)\alpha_j\int_{\mathbb{R}^N}\frac{\widetilde{\phi}_{n,\varepsilon}(A(x,\sigma_\alpha.y,\eta))-
\widetilde{\phi}_{n,\varepsilon}(A(x,y,\eta))}
{<y,\alpha>}d\mu_x(\eta),
\end{eqnarray*}
and $\int_{\mathbb{R}^N}\mathcal K_j^{(\varepsilon,n)}(x,y)f(y)dm_k(y)$ could be an integrable singular integral for $f\in L^2(m_k)$ because $suppT_j\phi_{n,\varepsilon}\subseteq (B{(0,\varepsilon)})^c$ does not necessarily imply $suppT_j\tau_x(\phi_{n,\varepsilon})(-\cdot)$
$\subseteq(\bigcup_{g\in G}B(gx,\varepsilon))^c$ as is shown in Section 2.

For any $f\in L^2(m_k)$ with compact support, if $\mathcal R_j^\ast$ is the adjoint operator of
$\mathcal R_j$, then 

$$\mathcal R_j^\ast(f)(y)=\lim_{\varepsilon\rightarrow0}\lim_{n\rightarrow\infty}\int_{\mathbb{R}^N}\mathcal K_j^{(\varepsilon,n)}(x,y)f(x)dm_k(x).$$
By $\mathcal R_j=-\mathcal R_j^\ast$,
\begin{equation}{}\\\mathcal R_j(f)(y)=-\lim_{\varepsilon\rightarrow0}\lim_{n\rightarrow\infty}\int_{\mathbb{R}^N}\mathcal K_j^{(\varepsilon,n)}(x,y)f(x)dm_k(x).\end{equation}
If
$y\in\mathbb{R}^N$ satisfies $Gy\cap suppf=\varnothing$, then $d_G(x,y)>0$ for all $x\in suppf$.
For any $0<\varepsilon<d_G(x,y)$, from (3.4),
$$\varepsilon<  A(x,y,\eta)\; ; \quad \varepsilon<  A(x,\sigma_\alpha.y,\eta),\quad x\in supp(f),\;\eta\in co(G.x).$$
Then from the same argument as in the proof of \cite[Proposition 3.2]{AS}, 
$$\lim_{\varepsilon\rightarrow0}\lim_{n\rightarrow\infty}\mathcal K_j^{(\varepsilon,n)}(x,y)={\mathcal K}_j(x,y)$$
and
\begin{equation}{}\\\mathcal R_j(f)(y)=-\int_{\mathbb{R}^N}\mathcal K_j(x,y)f(x)dm_k(x), \;Gy\cap suppf=\varnothing.\end{equation}
with the aid of dominated convergence theorem.

\

\section{The Dunkl--type $BMO$ Space and Proof of Theorem 1.1}

The study of Dunkl--type $BMO$ space dates back to \cite {Gu}, where the space was defined for the one dimensional case. Here we will define the Dunkl--type $BMO$ space for multidimensional cases.

Given a function $f\in L_{loc}^1(m_k)$, and a ball $B(x,r)$. Denote $B_r\equiv B(0,r)$. Let $f_{B_r}(x)$
be the average of $\tau_xf$ on $B_r$: $$f_{B_r}(x)=\frac1{m_k(B_r)}\int_{B_r}\tau_xf(y)dm_k(y).$$
\begin{defn}
The Dunkl--type $BMO$ space is the space of all those
functions in $L_{loc}^1(m_k)$ satisfying
$\begin{array}{l}{\left\|f\right\|}_{\ast,\;k}<\infty\end{array}$, where $${\left\|f\right\|}_{\ast,\;k}=\underset{r>0,\;x\in\mathbb{R}^N}{sup}\frac1{m_k(B_r)}\int_{B_r}\vert\tau_xf(y)-f_{B_r}(x)\vert dm_k(y).$$ We can consider
$BMO$ as the quotient of the above space by the space of constant
functions to let ${\left\|\cdot\right\|}_{\ast,k}$ be a norm.
\end{defn}
\noindent{\it Proof of Theorem 1.1}.

Given a function $f$ in $L^\infty$ compactly supported, thanks to Theorem 1.1.\;i), $\tau_xf$ is compactly supported. Write $\tau_xf=g_1+g_2$, where $g_1=(\tau_xf)\chi_{B_{2r}}$, and $g_2=(\tau_xf)\chi_{(B_{2r})^c}$. This is the only way of decomposition because as is shown in Section 2, $suppf\subseteq\bigcup_{g\in G}B(gx,r)$ can not usually imply $supp\tau_xf\subseteq B(0,r)$. For any $y\in B_r$, $Gy\cap suppg_2=\varnothing$. Then by (3.2), (4.1), (4.2) and the uniform boundedness assumption of Dunkl translations,
\begin{align*}\left|\mathcal R_jg_2(y)-\mathcal R_jg_2(0)\right|&=\left|\int_{\mathbb{R}^N}(\mathcal K_j(z,\;y)-\mathcal K_j(z,\;0))g_2(z)dm_k(z)\right|\\&=\left|\int_{(B_{2r})^c}(\mathcal K_j(z,\;y)-\mathcal K_j(z,\;0))\tau_xf(z)dm_k(z)\right|\\&\leq\int_{d_G(0,z)>2\left\|y\right\|}\left|\mathcal K_j(z,\;y)-\mathcal K_j(z,\;0)\right|dm_k(z){\left\|\tau_xf\right\|}_\infty\\&\leq C{\left\|f\right\|}_\infty\end{align*}

Then using again the uniform boundedness assumption of Dunkl translations, and by the $L^2$ boundedness of the Riesz transform and \eqref{mk},
\begin{align*}\frac1{m_k(B_r)}\int_{B_r}\left|\mathcal R_jg_1\right|&\leq\left(\frac1{m_k(B_r)}\int_{B_r}\left|\mathcal R_jg_1\right|^2\right)^{\textstyle\frac12}\\&\leq\left(\frac1{m_k(B_r)}\int\left|\left(\tau_xf\right)\chi_{B_{2r}}\right|^2\right)^{\textstyle\frac12}\\&=\left(\frac1{m_k(B_r)}\int_{B_{2r}}\left|\tau_xf\right|^2\right)^{\textstyle\frac12}\\&\leq C{\left\|\tau_xf\right\|}_\infty\\&\leq C{\left\|f\right\|}_\infty.\end{align*}
Therefore,
\begin{align*}\frac1{m_k(B_r)}\int_{B_r}\left|\mathcal R_j\tau_xf(y)-\mathcal R_jg_2(0)\right|dm_k(y)\leq\frac1{m_k(B_r)}\int_{B_r}\left|\mathcal R_jg_1(y)\right|dm_k(y)\\\;\;+\frac1{m_k(B_r)}\int_{B_r}\left|\mathcal R_jg_2(y)-\mathcal R_jg_2(0)\right|dm_k(y)\leq C{\left\|f\right\|}_\infty.\end{align*}

We will then extend the definition of Riesz transforms for Dunkl transform to all of $L^\infty$. For any function $f\in L^\infty$, define
\begin{align}\label{Rj}{\mathcal R}_j(f)(y)=-\lim_{\varepsilon\rightarrow0}\lim_{n\rightarrow\infty}\int_{\mathbb{R}^N}\left(\mathcal K_j^{(\varepsilon,n)}(z,y)-\mathcal K_j^{(\varepsilon,n)}(z,0)\right)f(z)dm_k(z).\end{align}
For any $\varepsilon>0$ and any $y\in\mathbb{R}^N$, there exists a sufficiently large $r>2\varepsilon$ such that $y\in B_r$.
Write $f=f_1+f_2$, where $f_1=f\chi_{B_{2r}}$, and $f_2=f\chi_{(B_{2r})^c}$.
Then $f_1$ belongs to $L^2(m_k)$ and so $\mathcal R_jf_1(y)$ converges almost everywhere.
For any natural number $n$ larger than $\frac1\varepsilon$ and any $z\in suppf_2$,
$$d_G\left(z,0\right)\geq2r>\varepsilon+\frac1n;\;\;\;\;d_G\left(z,y\right)\geq d_G\left(z,0\right)-\left\|y\right\|\geq r>\varepsilon+\frac1n.$$
So for the above $\varepsilon$ and $n$, $$A(z,0,\eta)>\varepsilon+\frac1n;\quad A(z,y,\eta)>\varepsilon+\frac1n,\quad\eta\in co(G.x),$$
and $$\mathcal K_j^{(\varepsilon,n)}(z,0)={\mathcal K}_j(z,0);\quad\mathcal K_j^{(\varepsilon,n)}(z,y)={\mathcal K}_j(z,y).$$
Therefore,
$${\mathcal R}_j(f_2)(y)=-\int_{{(B_{2r})}^c}\left({\mathcal K}_j(z,y)-{\mathcal K}_j(z,0)\right)f(z)dm_k(z).$$
This integral converges since
\begin{align*}&\;\;\left|\int_{(B_{2r})^c}(\mathcal K_j(z,\;y)-\mathcal K_j(z,\;0))f(z)dm_k(z)\right|\\&\leq\int_{d_G(0,z)>2\left\|y\right\|}\left|\mathcal K_j(z,\;y)-\mathcal K_j(z,\;0)\right|dm_k(z){\left\|f\right\|}_\infty\leq C{\left\|f\right\|}_\infty,\;y \in B_r.\end{align*}
So the above definition \eqref{Rj} for Riesz transforms for Dunkl transform on $L^\infty$ makes sense for any $y\in\mathbb{R}^N$and coincides with formula (4.2) for compactly supported functions on  $L^2(m_k)$ as Dunkl--type $BMO$ functions since the two formulae differ by a constant.

Under the uniform boundedness assumption of Dunkl translations, for any $x\in \mathbb R^n$, $r>0$ and all $f \in L^{\infty}$, $\tau_xf\in L^\infty$. Write $\tau_xf=g_1+g_2$, where $g_1=(\tau_xf)\chi_{B_{2r}}$, and $g_2=(\tau_xf)\chi_{(B_{2r})^c}$. Then 
$${\mathcal R}_j(g_1)(y)=-\lim_{\varepsilon\rightarrow0}\lim_{n\rightarrow\infty}\int_{\mathbb{R}^N}\mathcal K_j^{(\varepsilon,n)}(z,y)g_1(z)dm_k(z)+{\mathcal R}_j(g_1)(0),$$
and for any $y\in B_r$,
$${\mathcal R}_j(g_2)(y)=-\int_{{(B_{2r})}^c}\left({\mathcal K}_j(z,y)-{\mathcal K}_j(z,0)\right)\tau_xf(z)dm_k(z).$$
By the same argument as for compactly supported functions, we have
\begin{align*}&\frac1{m_k(B_r)}\int_{B_r}\left|\mathcal R_j\tau_xf(y)-{\mathcal R}_j(g_1)(0)\right|dm_k(y)=\frac1{m_k(B_r)}\int_{B_r}\Big|\mathcal R_j\left\{\left(\tau_xf\right)\chi_{B_{2r}}\right\}\left(y\right)\\&+\int_{(B_{2r})^c}(-\mathcal K_j(z,\;y)+\mathcal K_j(z,\;0))\tau_xf(z)dm_k(z)\Big|dm_k(y)\leq C{\left\|f\right\|}_\infty.\end{align*}

The following Lemma will then imply the boundedness of Riesz transforms for Dunkl transform from $L^\infty$ to Dunkl--type $BMO$ space.

\begin{lem} Under the uniform boundedness assumption of Dunkl translations, for any $f\in L^\infty$ and any fixed $x\in\mathbb{R}^N$, $r>0$, $\mathcal R_j\tau_xf(y)$ and $\tau_x\mathcal R_jf(y)$ differ by a constant independent of $y$ for $y\in B_r$.
\end{lem}

\begin{proof}

This statement is obvious for functions compactly supported on $L^\infty$, implying that for any function $f\in L^\infty$ compactly supported,
\begin{align}\frac1{m_k(B_r)}\int_{B_r}\left|\left[\mathcal R_j,\tau_x\right]f(y)-\frac1{m_k(B_r)}\int_{B_r}\left[\mathcal R_j,\tau_x\right]f\right|dm_k(y)=0,\end{align}
where $\left[X,Y\right]:=XY-YX.$

  For all $f\in L^\infty$, if $\left\{D_i\right\}$ is a countable open cover of $\mathbb{R}^N$, where each $D_i$ is bounded, then by partition of unity, $f$ can be written as $\textstyle\overset\infty{\underset{i=1}{\sum f_i}}$, where $suppf_i\subset D_i$. Denote $g:=\left[\mathcal R_j,\tau_x\right]f$. Then 
$$g=\sum_{i=1}^\infty g^{(i)},\;g^{(i)}=\left[\mathcal R_j,\tau_x\right]f_i,$$
and by (5.1),
\begin{align*}&\frac1{m_k(B_r)}\int_{B_r}\left|g(y)-\frac1{m_k(B_r)}\int_{B_r}g\right|dm_k(y)\\=&\frac1{m_k(B_r)}\int_{B_r}\left|\sum_{i=1}^\infty g^{(i)}(y)-\sum_{i=1}^\infty \frac1{m_k(B_r)}\int_{B_r}g^{(i)}\right|dm_k(y)\\\leq&\sum_{i=1}^\infty\frac1{m_k(B_r)}\int_{B_r}\left|g^{(i)}(y)-\frac1{m_k(B_r)}\int_{B_r}g^{(i)}\right|dm_k(y)\\&=0.\end{align*}
And so  $g(y)=\left[\mathcal R_j,\tau_x\right]f(y)$ is a constant independent of $y$ on $y\in B_r$.
\qedhere
\end{proof}

Denote by $C_x$ the constant $\tau_x\mathcal R_jf(y)$ and $\mathcal R_j\tau_xf(y)$ differ plus ${\mathcal R}_j(g_1)(0)$. Then 
\begin{align*}\frac1{m_k(B_r)}\int_{B_r}\left|\tau_x\mathcal R_jf(y)-{\left(\mathcal R_jf\right)}_{B_r}(x)\right|dm_k(y)\leq\frac1{m_k(B_r)}\int_{B_r}\left|\tau_x\mathcal R_jf(y)-C_x\right|dm_k(y)\\+\left|C_x-{\left(\mathcal R_jf\right)}_{B_r}(x)\right|\leq C{\left\|f\right\|}_\infty+\frac1{m_k(B_r)}\int_{B_r}\left|\tau_x\mathcal R_jf(y)-C_x\right|dm_k(y)\leq2C{\left\|f\right\|}_\infty.\end{align*}
$\hfill\Box$

\section*{Acknowledgments}The author would like to thank Margit R\"osler very much for her correction of Theorem 1.2 ii) and some valuable comments, and thank the reviewer and his former adviser Heping Wang for valuable suggestions. The paper is based on the master thesis of the author at
Capital Normal University

\

\end{document}